\numberwithin{equation}{section}
\numberwithin{figure}{section}
\theoremstyle{plain}
\newtheorem{thm}{\protect\theoremname}[section]
\theoremstyle{definition}
\newtheorem{defn}[thm]{\protect\definitionname}
\theoremstyle{remark}
\newtheorem{rem}[thm]{\protect\remarkname}
\theoremstyle{plain}
\newtheorem{lem}[thm]{\protect\lemmaname}
\newtheorem{prop}[thm]{\protect\propositionname}
\newtheorem*{question*}{\protect\questionname}
\newtheorem{cor}[thm]{\protect\corollaryname}
\theoremstyle{definition}
\newtheorem{example}[thm]{\protect\examplename}
\providecommand{\corollaryname}{Corollary}
\providecommand{\definitionname}{Definition}
\providecommand{\examplename}{Example}
\providecommand{\lemmaname}{Lemma}
\providecommand{\propositionname}{Proposition}
\providecommand{\questionname}{Question}
\providecommand{\remarkname}{Remark}
\providecommand{\theoremname}{Theorem}
\begin{document}
\subjclass[2020]{Primary: 47B65; Secondary: 42C15, 47A62}
\title[Residual-Weighted Decomposition]{Residual-Weighted Decomposition of Positive Operators}
\begin{abstract}
This paper investigates an iterative rank-one decomposition scheme
for positive operators on a Hilbert space based on a residual-weighted
congruence update. At each step the operator is compressed along a
chosen unit vector while remaining inside the positive cone, and the
resulting map defines a monotone dynamical system on the cone of positive
operators. We prove that the associated residuals admit a canonical
telescoping decomposition into rank-one terms and a limiting positive
operator, and we identify this limit together with an exact energy
identity expressing the defect between the initial and limiting operators
as a convergent series of rank-one contributions. In the case where
the iteration exhausts the operator, the residual directions form
a Parseval frame for the natural range space, yielding a constructive
procedure that produces Parseval frames without spectral calculus.
We further solve the inverse problem by characterizing those decreasing
chains with rank-one steps that arise from such dynamics via an intrinsic
normalization condition involving the Moore-Penrose inverse. For trace-class
operators we obtain a scalar energy identity and show that mild greedy
or density conditions on the chosen directions guarantee exhaustion.
An application to reproducing kernel Hilbert spaces illustrates the
abstract results.
\end{abstract}

\author{James Tian}
\address{Mathematical Reviews, 535 W. William St, Suite 210, Ann Arbor, MI
48103, USA}
\email{jft@ams.org}
\keywords{positive operators, rank-one decompositions, Parseval frames, cone
dynamics, reproducing kernel Hilbert spaces}

\maketitle
\tableofcontents{}

\section{Introduction}\label{sec:1}

Decomposing vectors in a Hilbert space is a well-understood problem,
with many constructive methods available. Classical Fourier series
and orthogonal expansions provide representations of the form
\[
x=\sum_{n\ge0}\left\langle u_{n},x\right\rangle u_{n},
\]
and modern greedy algorithms iteratively build similar expansions
with a residual that remains a vector. The update 
\[
r_{n+1}=r_{n}-\left\langle u_{n},r_{n}\right\rangle u_{n}
\]
does not leave the ambient space and naturally preserves the basic
geometry.

For positive operators, the situation is fundamentally different.
If we try to remove a rank-one term from a positive operator $R_{n}$
by an additive update 
\[
R_{n+1}=R_{n}-\alpha P
\]
($\alpha>0$, $P$ a projection), there is no guarantee that $R_{n+1}$
remains positive. Unless $\alpha$ is chosen using spectral information,
as in Cholesky-type factorization \cite{MR1098323,MR2738209,MR3024913,MR3509211},
the update may leave the cone $B\left(H\right)_{+}$. This basic obstruction
limits direct analogues of greedy algorithms in the operator setting;
the cone $B(H)_{+}$ is not closed under subtractions unless the removed
term is specifically dominated by the current residual in the Löwner
order.

As a result, much of the work on constructive approximation of positive
operators has followed one of two directions. One approach uses convex
optimization, often in the form of nuclear norm minimization \cite{MR2600248,MR2680543,MR2565240},
where positivity is enforced via spectral projections onto the positive
cone, implemented through repeated singular value or eigenvalue thresholding.
These methods are effective, but they typically require full spectral
computations at each step. A second line of work adapts greedy ideas
to matrices or trace class operators \cite{MR1806955,MR2821591,MR2848161}.
Here one must monitor positivity by step-size control or projections
onto $B\left(H\right)_{+}$, which complicates the iteration and weakens
the analogy with the vector case.

In this paper, we study a different type of decomposition scheme that
avoids additive updates entirely. We analyze the multiplicative residual
map 
\begin{equation}
R_{n+1}=\Phi_{u}\left(R_{n}\right):=R^{1/2}_{n}\left(I-\left|u\right\rangle \left\langle u\right|\right)R^{1/2}_{n},\label{eq:1-1}
\end{equation}
where $u\in H$ is a unit vector. The update \eqref{eq:1-1} serves
as a canonical, coordinate-free extraction mechanism inside the cone
$B\left(H\right)_{+}$. Unlike additive or spectral methods, it selects
the next rank-one direction through the intrinsic geometry of the
current residual. 

The expression is closely related to the Lüders update from quantum
measurement theory: for a projection $P$, a post-measurement state
is obtained by the compression $\rho\mapsto\left(I-P\right)\rho\left(I-P\right)$
(up to normalization; selective outcome, trace-nonincreasing) or by
the pinching $\rho\mapsto P\rho P+\left(I-P\right)\rho\left(I-P\right)$
(non-selective, trace-preserving), both linear completely positive
operations \cite{MR46289,MR725167,MR263379,MR2797301}. In contrast,
$\Phi_{u}\left(R\right)$ is nonlinear in $R$ and should be viewed
as a congruence/compression step. Here we treat it as an iterative
tool for constructing operator decompositions. This perspective makes
clear that the scheme defines a discrete-time dynamical system on
the cone $B(H)_{+}$.

The main results are as follows:

1. Canonical Decomposition and Frames. We prove that for any sequence
of directions, the residuals $R_{n}$ decrease monotonically to a
limit $R_{\infty}$. In the case of complete exhaustion ($R_{\infty}=0$),
we show that the extracted vectors $E_{n}=R^{1/2}_{n}u_{n}$ form
a Parseval frame for the range of $R_{0}$ in the sense of operator-valued
frame theory \cite{MR1686653,MR3329092,MR3495345}. This gives a constructive
method for producing Parseval frames directly from a positive operator,
without diagonalization.

2. Inverse Problem and Intrinsic Normalization. We characterize the
decreasing chains of positive operators with rank-one steps that arise
from the residual-weighted dynamics, via an intrinsic normalization
condition involving the Moore-Penrose inverses of the residuals.

3. Trace-class Energy Identity and Exhaustion Criteria. When the initial
operator is trace-class, we obtain a scalar energy identity for the
trace defect between the initial and limiting operators, and we prove
that weak greedy or density conditions on the chosen directions force
complete exhaustion and hence the Parseval frame property.

The paper is organized as follows. \prettyref{sec:2} introduces the
residual update and basic properties of $\Phi_{u}$. \prettyref{sec:3}
develops the dynamic decomposition, solves the inverse problem, and
describes the connection to Parseval frames. It also records the trace-class
energy identity, establishes the exhaustion criteria, and develops
an application to reproducing kernel Hilbert spaces, where the iteration
produces an explicit feature-map representation of a positive definite
kernel and yields an iterative kernel decomposition that avoids diagonalizing
the associated integral or Gram operator.

\section{Residual-weighted rank-one update}\label{sec:2}

Let $H$ be a complex Hilbert space. Let $B\left(H\right)_{+}$ denote
the set of bounded positive operators on $H$, partially ordered by
the Löwner order: 
\[
0\leq A\leq B\Longleftrightarrow0\leq\left\langle x,Ax\right\rangle \leq\left\langle x,Bx\right\rangle ,\quad\forall x\in H.
\]
Throughout, all inner products are linear in the second variable.
Denote by $\left|a\left\rangle \right\langle b\right|$ the rank-1
operator $x\mapsto\left\langle b,x\right\rangle a$. 
\begin{defn}
\label{def:b-1}Fix a unit vector $u\in H$ and the rank-one orthogonal
projection 
\[
P:=|u\left\rangle \right\langle u|.
\]
For all $R\in B\left(H\right)_{+}$, let 
\[
\Phi\left(R\right):=R^{1/2}\left(I-P\right)R^{1/2}.
\]
The operator $R\mapsto\Phi\left(R\right)$ will be referred to as
the residual-weighted rank-one update associated with\emph{ $u$}.
\end{defn}

\begin{rem}
By definition, $\Phi\left(R\right)=R-|R^{1/2}u\left\rangle \right\langle R^{1/2}u|$.
Thus, $ran\left(R-\Phi\left(R\right)\right)\subset ran\bigl(R^{1/2}\bigr)$
and $rank\left(R-\Phi\left(R\right)\right)\leq1$. In particular,
if $u\notin ker\left(R\right)$, then $\Phi\left(R\right)$ is obtained
from $R$ by removing the rank-one direction $R^{1/2}u$.
\end{rem}

\begin{lem}
\label{lem:b-3} $\Phi$ maps $B\left(H\right)_{+}$ into $B\left(H\right)_{+}$,
and $0\leq\Phi\left(R\right)\leq R$. 
\end{lem}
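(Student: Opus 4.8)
The plan is to reduce everything to one elementary fact: congruence by a bounded operator preserves positivity and the Löwner order, and then to apply it to the projection $I-P$.

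First I would recall that, since $R\in B\left(H\right)_{+}$, its positive square root $R^{1/2}$ exists and is a bounded self-adjoint positive operator, so $\Phi\left(R\right)=R^{1/2}\left(I-P\right)R^{1/2}$ is at least bounded and self-adjoint. Because $P=|u\rangle\langle u|$ is an orthogonal projection, $I-P$ is also an orthogonal projection, hence $0\leq I-P\leq I$ in the Löwner order.

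The key step is then the order-preservation of congruence: for any bounded $S$ and any $A\leq B$ one has $S^{*}AS\leq S^{*}BS$, since $\langle x,S^{*}(B-A)Sx\rangle=\langle Sx,(B-A)Sx\rangle\geq 0$ for all $x\in H$. Taking $S=R^{1/2}$ (so $S^{*}=S$) and applying this to the chain $0\leq I-P\leq I$ yields
\[
0=R^{1/2}\,0\,R^{1/2}\ \leq\ R^{1/2}\left(I-P\right)R^{1/2}\ \leq\ R^{1/2}IR^{1/2}=R,
\]
that is, $0\leq\Phi\left(R\right)\leq R$. Together with self-adjointness this puts $\Phi\left(R\right)$ in $B\left(H\right)_{+}$, proving both assertions simultaneously. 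For the upper bound one may alternatively invoke the Remark directly: $R-\Phi\left(R\right)=R^{1/2}PR^{1/2}=|R^{1/2}u\rangle\langle R^{1/2}u|$, which is manifestly a positive rank-at-most-one operator.

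I do not expect any real obstacle here; the statement is essentially immediate from the definitions. The only point meriting a word of care is that $R^{1/2}$ is self-adjoint, so that the map $X\mapsto R^{1/2}XR^{1/2}$ has the congruence form $X\mapsto S^{*}XS$ and is therefore both positivity-preserving and monotone for the Löwner order.
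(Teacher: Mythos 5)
Your proof is correct and follows essentially the same route as the paper: both arguments reduce to the positivity of the congruence $X\mapsto R^{1/2}XR^{1/2}$, applied to $I-P\geq 0$ for the lower bound and to $P\geq 0$ (equivalently $I-P\leq I$) for the upper bound. The paper merely phrases the first step as $\left\langle x,\Phi\left(R\right)x\right\rangle =\Vert\left(I-P\right)R^{1/2}x\Vert^{2}\geq0$, using idempotency of $I-P$, which is the same computation in slightly different clothing.
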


\begin{proof}
For all $x\in H$, 
\begin{align*}
\left\langle x,\Phi\left(R\right)x\right\rangle  & =\Vert\left(I-P\right)R^{1/2}x\Vert^{2}\geq0,
\end{align*}
so $\Phi:B\left(H\right)_{+}\hookrightarrow B\left(H\right)_{+}$.
Also, 
\[
R-\Phi\left(R\right)=R^{1/2}PR^{1/2}\geq0,
\]
i.e., $\Phi\left(R\right)\leq R$.
\end{proof}

\section{Canonical dynamic decomposition}\label{sec:3}

In this section, we show that the iterates of $\Phi$ form a monotone
sequence converging in the strong operator topology, identify the
limit, and express the initial operator as a telescoping sum of rank-one
terms. We then characterize complete exhaustion and connect the residual
directions to a Parseval frame on the natural range space, followed
by an inverse characterization that specifies exactly which rank-one
chains arise from this dynamics.

Fix an initial operator $R_{0}\in B\left(H\right)_{+}$ and a sequence
of unit vectors $\left(u_{n}\right)_{n\geq0}\subset H$. Consider
the residual-weighted rank-one updates
\begin{equation}
R_{n+1}=\Phi_{u_{n}}\left(R_{n}\right)=R^{1/2}_{n}\left(I-P_{n}\right)R^{1/2}_{n},\qquad P_{n}=\left|u_{n}\left\rangle \right\langle u_{n}\right|,\label{eq:3-1}
\end{equation}
whenever $R_{n}\neq0$. If $R_{n}=0$ for some $n$, then $R_{k}=0$
and $\Phi_{u_{k}}\left(R_{k}\right)=0$ for all $k\geq n$, so in
that case the iteration becomes stationary and there is nothing further
to prove. We therefore assume for the remainder of the section that
$R_{n}\neq0$ for all $n$.

It is convenient to introduce the residual vectors 
\[
E_{n}=R^{1/2}_{n}u_{n}\in H,\qquad n\geq0.
\]

The first step is to rewrite each update as the removal of a rank-one
positive operator:
\[
R_{n}-R_{n+1}=\left|E_{n}\left\rangle \right\langle E_{n}\right|,\qquad n\geq0.
\]

From this point on the sequence $\left(R_{n}\right)$ can be viewed
as a monotone decreasing sequence of positive operators, with each
difference $R_{n}-R_{n+1}$ explicitly identified as a rank-one term.
The next step is to make this precise and to identify the limiting
operator.
\begin{prop}
\label{prop:c-1} The sequence $\left(R_{n}\right)_{n\geq0}$ in \eqref{eq:3-1}
is decreasing in the Löwner order and bounded below by $0$. There
is a unique positive operator $R_{\infty}\in B\left(H\right)_{+}$
such that 
\[
\left\langle x,R_{\infty}x\right\rangle =\lim_{n\to\infty}\left\langle x,R_{n}x\right\rangle ,\qquad x\in H,
\]
and $R_{n}\to R_{\infty}$ in the strong operator topology.
\end{prop}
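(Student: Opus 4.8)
The plan is to prove the standard monotone convergence principle for self-adjoint operators in this concrete setting. First I would fix $x\in H$ and note that, since $0\le R_{n+1}\le R_{n}$ for all $n$ by \prettyref{lem:b-3} (equivalently, since $R_{n}-R_{n+1}=|E_{n}\rangle\langle E_{n}|\ge0$), the scalar sequence $\langle x,R_{n}x\rangle$ is nonincreasing and bounded below by $0$, hence convergent. By the polarization identity the sesquilinear forms $(x,y)\mapsto\langle x,R_{n}y\rangle$ then converge pointwise as well; call the limit $b(x,y)$. Because the iteration starts at $R_{0}$ and is decreasing, induction gives $0\le R_{n}\le R_{0}$, so $\|R_{n}\|\le\|R_{0}\|$ for every $n$, and therefore $|b(x,y)|\le\|R_{0}\|\,\|x\|\,\|y\|$; thus $b$ is a bounded sesquilinear form on $H$.

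Next I would invoke the Riesz representation theorem for bounded sesquilinear forms to produce a unique $R_{\infty}\in B(H)$ with $\langle x,R_{\infty}y\rangle=b(x,y)$ for all $x,y\in H$. Setting $y=x$ gives $\langle x,R_{\infty}x\rangle=\lim_{n}\langle x,R_{n}x\rangle\ge0$, so $R_{\infty}\in B(H)_{+}$; this also shows $R_{\infty}$ is self-adjoint and that $0\le R_{\infty}\le R_{n}$ for every $n$. Uniqueness is automatic, since a bounded operator on a complex Hilbert space is determined by its quadratic form.

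Finally I would upgrade this weak convergence to strong convergence. Put $S_{n}:=R_{n}-R_{\infty}\ge0$. The Cauchy--Schwarz inequality for the positive semidefinite form $(a,c)\mapsto\langle a,S_{n}c\rangle$ gives, for each $x\in H$,
\[
\|S_{n}x\|^{2}=\langle x,S_{n}(S_{n}x)\rangle\le\langle x,S_{n}x\rangle^{1/2}\,\langle S_{n}x,S_{n}(S_{n}x)\rangle^{1/2}\le\langle x,S_{n}x\rangle^{1/2}\,\|S_{n}\|^{1/2}\,\|S_{n}x\|,
\]
and hence $\|S_{n}x\|\le\|R_{0}\|^{1/2}\langle x,S_{n}x\rangle^{1/2}$, using $\|S_{n}\|\le\|R_{n}\|\le\|R_{0}\|$. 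Since $\langle x,S_{n}x\rangle=\langle x,R_{n}x\rangle-\langle x,R_{\infty}x\rangle\to0$, this forces $\|(R_{n}-R_{\infty})x\|\to0$, i.e. $R_{n}\to R_{\infty}$ in the strong operator topology. I do not expect a genuine obstacle: the content is the classical monotone strong-convergence argument, and the only delicate point is precisely this Cauchy--Schwarz estimate, which is what converts the quadratic-form (weak) limit into a norm (strong) limit.
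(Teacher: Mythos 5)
Your proposal is correct and follows essentially the same route as the paper: monotonicity from $R_{n}-R_{n+1}=|E_{n}\rangle\langle E_{n}|\ge 0$, passage to a bounded limiting (sesqui)quadratic form, Riesz representation to obtain $R_{\infty}$, and then strong convergence. The only difference is that you spell out the Cauchy--Schwarz estimate $\|S_{n}x\|\le\|S_{n}\|^{1/2}\langle x,S_{n}x\rangle^{1/2}$ that upgrades weak to strong convergence, a step the paper states and delegates to a reference; your version of that estimate is correct.
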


\begin{proof}
We have 
\[
R_{n}-R_{n+1}=\left|E_{n}\left\rangle \right\langle E_{n}\right|\geq0,\qquad n\geq0,
\]
so $R_{n+1}\leq R_{n}$ and the sequence $\left(R_{n}\right)$ is
monotone decreasing in the Löwner order. Since each $R_{n}$ is positive,
we also have $0\leq R_{n+1}\leq R_{n}$ for all $n$, so $R_{n}$
is bounded below by $0$.

Fix $x\in H$. Then $\left\langle x,R_{n+1}x\right\rangle \leq\left\langle x,R_{n}x\right\rangle $
for all $n$, and each term is nonnegative because $R_{n}\geq0$.
Hence the real sequence $\left\{ \left\langle x,R_{n}x\right\rangle \right\} _{n\geq0}$
is decreasing and bounded below by $0$, so the limit 
\[
\ell\left(x\right)=\lim_{n\to\infty}\left\langle x,R_{n}x\right\rangle 
\]
exists in $\left[0,\infty\right)$.

We now show that this limit arises from a unique positive operator
$R_{\infty}\in B\left(H\right)_{+}$. Define a quadratic form $q:H\to\left[0,\infty\right)$
by 
\[
q\left(x\right)=\ell\left(x\right)=\lim_{n\to\infty}\left\langle x,R_{n}x\right\rangle .
\]
Each functional $x\mapsto\left\langle x,R_{n}x\right\rangle $ is
continuous and $R_{n}\leq R_{0}$ implies 
\[
0\leq q\left(x\right)\leq\left\langle x,R_{0}x\right\rangle \leq\left\Vert R_{0}\right\Vert \left\Vert x\right\Vert ^{2}
\]
for all $x\in H$, so $q$ is dominated by a constant multiple of
$\left\Vert x\right\Vert ^{2}$. It follows that $q$ is a bounded
quadratic form on $H$. A standard argument (Lax-Milgram, or polarization
plus the Riesz representation theorem) now shows that there exists
a unique bounded selfadjoint operator $R_{\infty}\in B\left(H\right)$
such that 
\[
q\left(x\right)=\left\langle x,R_{\infty}x\right\rangle 
\]
for all $x\in H$. Since $q\left(x\right)\geq0$ for all $x$, the
operator $R_{\infty}$ is positive.

It remains to show that $R_{n}\to R_{\infty}$ strongly. Fix $x\in H$
and consider the sequence $R_{n}x$. For $m>n$ we have 
\[
R_{n}-R_{m}=\sum^{m-1}_{k=n}\left(R_{k}-R_{k+1}\right)=\sum^{m-1}_{k=n}\left|E_{k}\left\rangle \right\langle E_{k}\right|\geq0,
\]
so 
\[
0\leq\left\langle x,\left(R_{n}-R_{m}\right)x\right\rangle =\left\langle x,R_{n}x\right\rangle -\left\langle x,R_{m}x\right\rangle .
\]
Letting $m\to\infty$ and using the definition of $R_{\infty}$ we
obtain 
\[
\left\langle x,\left(R_{n}-R_{\infty}\right)x\right\rangle =\left\langle x,R_{n}x\right\rangle -\left\langle x,R_{\infty}x\right\rangle \to0\quad\text{as }n\to\infty.
\]
Since the operators are positive and bounded, convergence of the quadratic
forms $\left\langle x,R_{n}x\right\rangle $ to $\left\langle x,R_{\infty}x\right\rangle $
for every $x\in H$ implies $R_{n}x\to R_{\infty}x$ in norm for every
$x\in H$, that is, $R_{n}\to R_{\infty}$ in the strong operator
topology. The uniqueness of $R_{\infty}$ follows from uniqueness
of the quadratic form representation. For more details, see e.g.,
\cite{MR751959}.
\end{proof}
With the limiting operator in hand we can express $R_{0}$ as a sum
of the limit and an infinite series of rank-one positive operators
determined by the residual vectors $E_{n}$.
\begin{prop}
\label{prop:c-2} For every $N\geq1$ we have the finite telescoping
identity 
\[
R_{0}-R_{N}=\sum^{N-1}_{n=0}\left|E_{n}\left\rangle \right\langle E_{n}\right|.
\]
Moreover, for every $x\in H$, 
\[
\left\langle x,\left(R_{0}-R_{\infty}\right)x\right\rangle =\sum^{\infty}_{n=0}\left|\left\langle E_{n},x\right\rangle \right|^{2},
\]
and the series on the right is absolutely convergent. In particular,
\[
R_{0}-R_{\infty}=\sum^{\infty}_{n=0}\left|E_{n}\left\rangle \right\langle E_{n}\right|
\]
with convergence in the strong operator topology. 
\end{prop}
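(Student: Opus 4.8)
The plan is to derive all three assertions from the single-step identity $R_{n}-R_{n+1}=\left|E_{n}\right\rangle \left\langle E_{n}\right|$ recorded just before the statement, together with \prettyref{prop:c-1}. First I would establish the finite telescoping identity by summing: $R_{0}-R_{N}=\sum_{n=0}^{N-1}\left(R_{n}-R_{n+1}\right)=\sum_{n=0}^{N-1}\left|E_{n}\right\rangle \left\langle E_{n}\right|$, which is an equality of bounded operators with no convergence subtlety since the sum is finite. (Equivalently, one induces on $N$, using $R_{N}-R_{N+1}=\left|E_{N}\right\rangle \left\langle E_{N}\right|$ for the inductive step.)

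Next, to obtain the scalar identity I would evaluate the telescoping identity in quadratic form at an arbitrary $x\in H$: $\left\langle x,\left(R_{0}-R_{N}\right)x\right\rangle =\sum_{n=0}^{N-1}\left\langle x,\left|E_{n}\right\rangle \left\langle E_{n}\right|x\right\rangle =\sum_{n=0}^{N-1}\left|\left\langle E_{n},x\right\rangle \right|^{2}$. The partial sums on the right are nondecreasing in $N$ because each summand is a nonnegative real, and by \prettyref{prop:c-1} the left side converges to $\left\langle x,\left(R_{0}-R_{\infty}\right)x\right\rangle $ as $N\to\infty$. Hence the series $\sum_{n\ge0}\left|\left\langle E_{n},x\right\rangle \right|^{2}$ converges and equals $\left\langle x,\left(R_{0}-R_{\infty}\right)x\right\rangle $; absolute convergence is automatic since every term is nonnegative.

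Finally, for the operator statement I would note that the partial sum operators are exactly $S_{N}:=\sum_{n=0}^{N-1}\left|E_{n}\right\rangle \left\langle E_{n}\right|=R_{0}-R_{N}$, so for each fixed $x\in H$ we have $S_{N}x=R_{0}x-R_{N}x$. By \prettyref{prop:c-1}, $R_{N}x\to R_{\infty}x$ in norm, whence $S_{N}x\to R_{0}x-R_{\infty}x=\left(R_{0}-R_{\infty}\right)x$ in norm, which is precisely convergence of $\sum_{n\ge0}\left|E_{n}\right\rangle \left\langle E_{n}\right|$ to $R_{0}-R_{\infty}$ in the strong operator topology.

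Since each step is a direct consequence of facts already in hand, there is no real obstacle here; the only points worth a word of care are that ``absolute convergence'' in the middle assertion is trivial because the terms are nonnegative reals, and that the strong-operator convergence of the rank-one series is not a new fact but literally a restatement of the strong convergence $R_{N}\to R_{\infty}$ proved in \prettyref{prop:c-1}.
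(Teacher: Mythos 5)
Your proposal is correct and follows essentially the same route as the paper's own proof: sum the single-step identities to get the finite telescoping formula, evaluate the quadratic form at $x$ and invoke \prettyref{prop:c-1} to pass to the limit, and identify the partial-sum operators with $R_{0}-R_{N}$ to read off strong operator convergence from $R_{N}\to R_{\infty}$. No gaps; your remarks that absolute convergence is automatic for nonnegative terms and that the operator convergence is a restatement of \prettyref{prop:c-1} match the paper's reasoning exactly.
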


\begin{proof}
The finite telescoping identity is obtained by summing the differences:
\[
R_{0}-R_{N}=\sum^{N-1}_{n=0}\left(R_{n}-R_{n+1}\right)=\sum^{N-1}_{n=0}\left|E_{n}\left\rangle \right\langle E_{n}\right|,
\]
which is valid for all $N\geq1$.

Fix $x\in H$. Applying both sides of the finite telescoping identity
to $x$ and taking the inner product with $x$ gives 
\[
\left\langle x,\left(R_{0}-R_{N}\right)x\right\rangle =\sum^{N-1}_{n=0}\left\langle x,E_{n}\right\rangle \left\langle E_{n},x\right\rangle =\sum^{N-1}_{n=0}\left|\left\langle E_{n},x\right\rangle \right|^{2}.
\]
By \prettyref{prop:c-1} we know that $R_{N}\to R_{\infty}$ strongly,
so in particular 
\[
\left\langle x,\left(R_{0}-R_{N}\right)x\right\rangle \to\left\langle x,\left(R_{0}-R_{\infty}\right)x\right\rangle \quad\text{as }N\to\infty.
\]
Therefore the real sequence of partial sums 
\[
s_{N}\left(x\right)=\sum^{N-1}_{n=0}\left|\left\langle E_{n},x\right\rangle \right|^{2}
\]
is increasing (each term is nonnegative) and converges to the finite
limit 
\[
\lim_{N\to\infty}s_{N}\left(x\right)=\left\langle x,\left(R_{0}-R_{\infty}\right)x\right\rangle .
\]
This shows that $\sum^{\infty}_{n=0}\left|\left\langle E_{n},x\right\rangle \right|^{2}$
converges and that 
\[
\left\langle x,\left(R_{0}-R_{\infty}\right)x\right\rangle =\sum^{\infty}_{n=0}\left|\left\langle E_{n},x\right\rangle \right|^{2}.
\]

To see that the series of rank-one operators converges strongly, we
note that for each $N$, 
\[
T_{N}=\sum^{N-1}_{n=0}\left|E_{n}\left\rangle \right\langle E_{n}\right|=R_{0}-R_{N},
\]
hence $T_{N}\to R_{0}-R_{\infty}$ strongly because $R_{N}\to R_{\infty}$
strongly. Thus 
\[
R_{0}-R_{\infty}=\sum^{\infty}_{n=0}\left|E_{n}\left\rangle \right\langle E_{n}\right|
\]
with strong operator convergence. 
\end{proof}
The next result records a useful specialization in the trace-class
case, which makes the energy identity even more explicit.
\begin{prop}
\label{prop:c-3} Assume in addition that $R_{0}$ is trace-class.
Then each $R_{n}$ and $R_{\infty}$ is trace-class and 
\[
tr\left(R_{0}-R_{\infty}\right)=\sum^{\infty}_{n=0}\left\Vert E_{n}\right\Vert ^{2},
\]
with absolute convergence of the series on the right. In particular,
\[
\sum^{\infty}_{n=0}\left\Vert E_{n}\right\Vert ^{2}\leq\text{tr}\left(R_{0}\right)<\infty.
\]
\end{prop}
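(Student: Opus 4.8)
The plan is to combine the order-ideal structure of the trace-class operators with the finite telescoping identity of \prettyref{prop:c-2}, and then to upgrade strong convergence to convergence of traces along the monotone sequence.

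First I would invoke the fact that the trace-class operators form an order ideal in $B\left(H\right)_{+}$: if $0\leq A\leq B$ and $B$ is trace-class, then $A$ is trace-class and $tr\left(A\right)\leq tr\left(B\right)$. By \prettyref{lem:b-3} and \prettyref{prop:c-1} we have $0\leq R_{n}\leq R_{0}$ for every $n$ and $0\leq R_{\infty}\leq R_{0}$, with $R_{0}$ trace-class by hypothesis; hence every $R_{n}$ and $R_{\infty}$ is trace-class, and in fact $tr\left(R_{n}\right),\,tr\left(R_{\infty}\right)\in\left[0,tr\left(R_{0}\right)\right]$.

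Next I would take the trace of the finite telescoping identity $R_{0}-R_{N}=\sum_{n=0}^{N-1}\left|E_{n}\left\rangle \right\langle E_{n}\right|$ from \prettyref{prop:c-2}. Since $tr\bigl(\left|E_{n}\left\rangle \right\langle E_{n}\right|\bigr)=\left\Vert E_{n}\right\Vert ^{2}$ and the trace is linear on trace-class operators, this gives $tr\left(R_{0}\right)-tr\left(R_{N}\right)=\sum_{n=0}^{N-1}\left\Vert E_{n}\right\Vert ^{2}$. The right-hand side is nondecreasing in $N$ (all terms are nonnegative) and bounded above by $tr\left(R_{0}\right)$ because $tr\left(R_{N}\right)\geq0$; therefore the series $\sum_{n\geq0}\left\Vert E_{n}\right\Vert ^{2}$ converges with sum at most $tr\left(R_{0}\right)$, and since its terms are nonnegative, convergence is automatically absolute. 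This already yields the displayed inequality $\sum_{n=0}^{\infty}\left\Vert E_{n}\right\Vert ^{2}\leq tr\left(R_{0}\right)<\infty$.

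It remains to identify the value of the sum as $tr\left(R_{0}-R_{\infty}\right)$, i.e.\ to show $tr\left(R_{N}\right)\to tr\left(R_{\infty}\right)$. This is the one point that is not formal, since the trace is not continuous for the strong operator topology in general; the resolution uses monotonicity. Fix an orthonormal basis $\left(e_{k}\right)$ of $H$. For each $k$, \prettyref{prop:c-1} gives $\left\langle e_{k},R_{N}e_{k}\right\rangle \downarrow\left\langle e_{k},R_{\infty}e_{k}\right\rangle $ as $N\to\infty$, while $0\leq\left\langle e_{k},R_{N}e_{k}\right\rangle \leq\left\langle e_{k},R_{0}e_{k}\right\rangle $ and $\sum_{k}\left\langle e_{k},R_{0}e_{k}\right\rangle =tr\left(R_{0}\right)<\infty$. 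Dominated convergence for series then gives $tr\left(R_{N}\right)=\sum_{k}\left\langle e_{k},R_{N}e_{k}\right\rangle \to\sum_{k}\left\langle e_{k},R_{\infty}e_{k}\right\rangle =tr\left(R_{\infty}\right)$. Letting $N\to\infty$ in $tr\left(R_{0}\right)-tr\left(R_{N}\right)=\sum_{n=0}^{N-1}\left\Vert E_{n}\right\Vert ^{2}$ then yields $tr\left(R_{0}-R_{\infty}\right)=tr\left(R_{0}\right)-tr\left(R_{\infty}\right)=\sum_{n=0}^{\infty}\left\Vert E_{n}\right\Vert ^{2}$, as claimed. The main obstacle is precisely this interchange of trace and limit; the rest is bookkeeping with the telescoping identity and the ideal property.
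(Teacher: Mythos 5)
Your proposal is correct and follows essentially the same route as the paper: the order-ideal property of the trace class gives that each $R_{n}$ and $R_{\infty}$ is trace-class, the trace of the telescoping identity gives the partial sums, and the only nontrivial point is passing to the limit $tr\left(R_{N}\right)\to tr\left(R_{\infty}\right)$. The paper simply cites the monotone convergence theorem for traces of positive operators at that step, whereas you prove it via dominated convergence over an orthonormal basis; that is a correct and welcome expansion of the same argument, not a different approach.
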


\begin{proof}
$R_{0}$ is trace-class and $R_{n}\leq R_{0}$ for all $n$, it follows
from standard properties of the trace that each $R_{n}$ is trace-class
and 
\[
0\leq tr\left(R_{n}\right)\leq tr\left(R_{0}\right)\quad\text{for all }n.
\]
Moreover, for each $n$, 
\[
R_{n}-R_{n+1}=\left|E_{n}\left\rangle \right\langle E_{n}\right|
\]
is rank-one, hence trace-class, and 
\[
tr\left(R_{n}-R_{n+1}\right)=tr\left(\left|E_{n}\left\rangle \right\langle E_{n}\right|\right)=\left\Vert E_{n}\right\Vert ^{2}.
\]
For each $N\geq1$ we now compute 
\[
tr\left(R_{0}-R_{N}\right)=tr\left(\sum^{N-1}_{n=0}\left(R_{n}-R_{n+1}\right)\right)=\sum^{N-1}_{n=0}tr\left(R_{n}-R_{n+1}\right)=\sum^{N-1}_{n=0}\left\Vert E_{n}\right\Vert ^{2},
\]
where we used linearity of the trace and the telescoping identity.

Since $R_{n}\downarrow R_{\infty}\geq0$ in the Löwner order, the
monotone convergence theorem for traces of positive operators implies
that $R_{\infty}$ is trace-class and 
\[
tr\left(R_{0}-R_{\infty}\right)=\lim_{N\to\infty}tr\left(R_{0}-R_{N}\right)=\lim_{N\to\infty}\sum^{N-1}_{n=0}\left\Vert E_{n}\right\Vert ^{2}=\sum^{\infty}_{n=0}\left\Vert E_{n}\right\Vert ^{2}.
\]
In particular, the series on the right converges and 
\[
\sum^{\infty}_{n=0}\left\Vert E_{n}\right\Vert ^{2}=tr\left(R_{0}-R_{\infty}\right)\leq tr\left(R_{0}\right).
\]
\end{proof}
Finally we formulate the main statement of this section, which collects
and summarizes the canonical decomposition and the uniqueness determined
by the update sequence.
\begin{thm}
\label{thm:c-4} Let $R_{0}\in B\left(H\right)_{+}$ and let $\left(u_{n}\right)_{n\geq0}\subset H$
be a sequence of unit vectors. Define $R_{n+1}=\Phi_{u_{n}}\left(R_{n}\right)$
and $E_{n}=R^{1/2}_{n}u_{n}$ for all $n\geq0$, and let $R_{\infty}$
be the strong limit of $\left(R_{n}\right)$ given by \prettyref{prop:c-1}.
Then:
\begin{enumerate}
\item $R_{n}\downarrow R_{\infty}$ in the Löwner order and $R_{n}\to R_{\infty}$
in the strong operator topology.
\item For every $x\in H$, 
\[
\left\langle x,\left(R_{0}-R_{\infty}\right)x\right\rangle =\sum^{\infty}_{n=0}\left|\left\langle E_{n},x\right\rangle \right|^{2},
\]
and the series converges.
\item The operator $R_{0}-R_{\infty}$ admits the strong operator decomposition
\[
R_{0}-R_{\infty}=\sum^{\infty}_{n=0}\left|E_{n}\left\rangle \right\langle E_{n}\right|.
\]
\item The vectors $E_{n}=R^{1/2}_{n}u_{n}$, and hence the rank-one operators
$\left|E_{n}\left\rangle \right\langle E_{n}\right|$, are uniquely
determined by $R_{0}$ and the update sequence $\left(u_{n}\right)$.
\end{enumerate}
\end{thm}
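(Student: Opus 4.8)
The plan is to obtain items (1)--(3) directly from the work already done. Item (1) is precisely \prettyref{prop:c-1}; items (2) and (3) are the two displayed conclusions of \prettyref{prop:c-2}, which also records the absolute convergence of the scalar series. (The degenerate case in which $R_n=0$ for some $n$ is trivial: then $E_k=0$ and $R_k=0=R_\infty$ for all large $k$, and every assertion holds vacuously.) So the only part requiring a fresh argument is item (4), the uniqueness of the vectors $E_n$, and hence of the rank-one operators $|E_n\rangle\langle E_n|$, in terms of the data $R_0$ and $(u_n)$.

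For (4) I would argue by induction on $n$ that $R_n$ is uniquely determined by $R_0$ and $u_0,\dots,u_{n-1}$. The base case $n=0$ is the hypothesis. For the inductive step the key point is that $\Phi_{u_n}$ is a genuine single-valued map on $B(H)_+$: the positive square root $S\mapsto S^{1/2}$ is the \emph{unique} positive operator squaring to $S$, so $R_{n+1}=R_n^{1/2}(I-P_n)R_n^{1/2}$ is an unambiguous function of $R_n$ and of $u_n$ (through $P_n=|u_n\rangle\langle u_n|$), and it again lies in $B(H)_+$ by \prettyref{lem:b-3}, so the recursion is well posed. Hence $R_{n+1}$ is determined by $R_0,u_0,\dots,u_n$. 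Once $R_n$ is pinned down, $E_n=R_n^{1/2}u_n$ is determined by $R_n$ and $u_n$, and therefore so is the rank-one operator $|E_n\rangle\langle E_n|$. Finally $R_\infty$, being the strong limit of the now-uniquely-determined sequence $(R_n)$, is itself unique by \prettyref{prop:c-1}. This establishes (4) and completes the proof.

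I do not expect a genuine obstacle: the theorem is essentially a repackaging of the preceding propositions together with the observation that the scheme is a deterministic discrete-time dynamical system on the cone $B(H)_+$. The single point meriting care -- and the only place a skeptical reader could push back -- is the well-definedness of the square-root step underlying $\Phi_{u_n}$, which is handled by invoking uniqueness of the positive square root. Everything else (the increasing partial sums in (2) being bounded above by $\langle x,R_0x\rangle$, and strong operator convergence of the rank-one series in (3)) is already contained in \prettyref{prop:c-1} and \prettyref{prop:c-2}, so nothing new is needed there.
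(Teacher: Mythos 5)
Your proposal is correct and follows essentially the same route as the paper: items (1)--(3) are read off from Propositions \ref{prop:c-1} and \ref{prop:c-2}, and item (4) is handled by induction using the uniqueness of the positive square root to see that the recursion $R_{n+1}=\Phi_{u_{n}}(R_{n})$ determines each $R_{n}$, hence each $E_{n}=R_{n}^{1/2}u_{n}$, from $R_{0}$ and $(u_{n})$. The paper's own proof is a more compressed statement of exactly this argument.
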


If in addition $R_{0}$ is trace-class, then 
\[
\text{\emph{tr}}\left(R_{0}-R_{\infty}\right)=\sum^{\infty}_{n=0}\left\Vert E_{n}\right\Vert ^{2}.
\]

\begin{proof}
Items (1)--(3) and the trace identity are exactly Propositions \ref{prop:c-1}--\ref{prop:c-3}.
The uniqueness of the vectors $E_{n}=R^{1/2}_{n}u_{n}$ follows from
the uniqueness of the positive square roots $R^{1/2}_{n}$ and the
recursive definition $R_{n+1}=\Phi_{u_{n}}\left(R_{n}\right)$, which
determines $R_{n}$ and hence $E_{n}$ inductively from $R_{0}$ and
$\left(u_{n}\right)$. 
\end{proof}

\subsection{Inverse Problem}

We consider the following inverse question for the residual-weighted
dynamics of this section: 
\begin{question*}
Which decreasing chains of positive operators with rank-one steps
arise from a residual-weighted iteration for some sequence of unit
vectors?
\end{question*}
The answer is expressed in terms of an intrinsic normalization condition
involving the Moore-Penrose inverse of the intermediate operators.
\begin{thm}
\label{thm:c-5} Let $R_{0}\in B\left(H\right)_{+}$ and let $\left(R_{n}\right)_{n\ge0}\subset B\left(H\right)_{+}$
and $\left(E_{n}\right)_{n\ge0}\subset H$ satisfy 
\[
R_{n+1}=R_{n}-\left|E_{n}\left\rangle \right\langle E_{n}\right|,\qquad n\ge0.
\]
Assume that:
\begin{enumerate}
\item For every $n$ with $E_{n}\neq0$, we have $E_{n}\in ran(R^{1/2}_{n})$; 
\item For every $n$, 
\begin{equation}
\left\langle E_{n},R^{\dagger}_{n}E_{n}\right\rangle =1,\label{eq:c-2}
\end{equation}
where $R^{\dagger}_{n}$ is the Moore-Penrose inverse of $R_{n}$. 
\end{enumerate}
Then there exists a sequence of unit vectors $\left(u_{n}\right)_{n\ge0}\subset H$
such that 
\[
R_{n+1}=\Phi_{u_{n}}\left(R_{n}\right)=R^{1/2}_{n}\left(I-P_{n}\right)R^{1/2}_{n},\qquad P_{n}=\left|u_{n}\left\rangle \right\langle u_{n}\right|,
\]
for all $n\ge0$, and 
\[
E_{n}=R^{1/2}_{n}u_{n},\qquad n\ge0.
\]
In particular, the chain $\left(R_{n}\right)$ arises from a residual
weighted iteration with residual vectors $E_{n}$.

Conversely, suppose $R_{0}\in B\left(H\right)_{+}$ and a sequence
of unit vectors $\left(u_{n}\right)_{n\ge0}\subset H$ are given such
that $u_{n}\in supp\left(R_{n}\right)$. Let 
\[
R_{n+1}=\Phi_{u_{n}}\left(R_{n}\right),\qquad E_{n}=R^{1/2}_{n}u_{n}.
\]
Then each $E_{n}$ lies in $ran(R^{1/2}_{n})$, and the normalization
condition \eqref{eq:c-2} holds for all $n\ge0$. 

\end{thm}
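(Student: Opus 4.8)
The plan is to prove the two implications separately, in each case transferring the assertion about the Moore-Penrose inverse $R^{\dagger}_{n}$ to the pseudoinverse $\bigl(R^{1/2}_{n}\bigr)^{\dagger}$ of the positive square root, which is better behaved on the subspaces that actually occur here. The one computational fact behind everything is that, on the functional calculus of a positive operator $R$, one has $R^{\dagger}=\bigl(\bigl(R^{1/2}\bigr)^{\dagger}\bigr)^{2}$, so that for every $y\in ran\bigl(R^{1/2}\bigr)$
\[
\bigl\langle y,R^{\dagger}y\bigr\rangle=\bigl\Vert\bigl(R^{1/2}\bigr)^{\dagger}y\bigr\Vert^{2}=\Vert w\Vert^{2},
\]
where $w=\bigl(R^{1/2}\bigr)^{\dagger}y$ is the minimal-norm solution of $R^{1/2}w=y$, equivalently the unique $w\in supp(R)=(\ker R)^{\perp}$ with $R^{1/2}w=y$. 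I would record this first, together with the remark that $ran\bigl(R^{1/2}\bigr)\subseteq dom\bigl(\bigl(R^{1/2}\bigr)^{\dagger}\bigr)$ always, so the right-hand side is meaningful exactly when $y\in ran\bigl(R^{1/2}\bigr)$.

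For the forward implication I would fix $n$ and first note that \eqref{eq:c-2} forces $E_{n}\neq0$ (otherwise its left side is $0$), so hypothesis (1) applies to every $n$ and yields $E_{n}\in ran\bigl(R^{1/2}_{n}\bigr)$. Set $u_{n}:=\bigl(R^{1/2}_{n}\bigr)^{\dagger}E_{n}$, the minimal-norm preimage, so that $R^{1/2}_{n}u_{n}=E_{n}$ and $u_{n}\in supp(R_{n})$; by the recorded fact and \eqref{eq:c-2}, $\Vert u_{n}\Vert^{2}=\bigl\langle E_{n},R^{\dagger}_{n}E_{n}\bigr\rangle=1$, so each $u_{n}$ is a unit vector. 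Finally, with $P_{n}=\left|u_{n}\right\rangle\left\langle u_{n}\right|$ and using self-adjointness of $R^{1/2}_{n}$,
\[
\Phi_{u_{n}}(R_{n})=R^{1/2}_{n}(I-P_{n})R^{1/2}_{n}=R_{n}-\left|R^{1/2}_{n}u_{n}\right\rangle\left\langle R^{1/2}_{n}u_{n}\right|=R_{n}-\left|E_{n}\right\rangle\left\langle E_{n}\right|=R_{n+1},
\]
the last step being the given recursion. Since this holds for all $n$, the chain $(R_{n})$ is precisely the residual-weighted iteration driven by $(u_{n})$, with residual vectors $E_{n}=R^{1/2}_{n}u_{n}$.

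For the converse I would start from the given unit vectors $u_{n}\in supp(R_{n})$, put $R_{n+1}=\Phi_{u_{n}}(R_{n})$ and $E_{n}=R^{1/2}_{n}u_{n}$, so that $E_{n}\in ran\bigl(R^{1/2}_{n}\bigr)$ is immediate. For \eqref{eq:c-2} I would apply the recorded fact with $y=E_{n}$: its minimal-norm preimage is
\[
\bigl(R^{1/2}_{n}\bigr)^{\dagger}E_{n}=\bigl(R^{1/2}_{n}\bigr)^{\dagger}R^{1/2}_{n}u_{n}=Q_{n}u_{n},
\]
where $Q_{n}=\bigl(R^{1/2}_{n}\bigr)^{\dagger}R^{1/2}_{n}$ is the orthogonal projection onto $(\ker R^{1/2}_{n})^{\perp}=(\ker R_{n})^{\perp}=supp(R_{n})$; as $u_{n}\in supp(R_{n})$ we get $Q_{n}u_{n}=u_{n}$, hence $\bigl\langle E_{n},R^{\dagger}_{n}E_{n}\bigr\rangle=\Vert u_{n}\Vert^{2}=1$. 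I would also remark that in the forward setting $E_{n}\perp\ker R_{n}$ is automatic from $R_{n+1}\ge0$, so the genuine content of hypothesis (1) is the strictly stronger membership $E_{n}\in ran\bigl(R^{1/2}_{n}\bigr)$ --- exactly what makes $u_{n}$ definable.

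I expect the only point requiring real care to be the non-closed-range bookkeeping: when $ran(R_{n})$ is not closed, $R^{\dagger}_{n}$ is unbounded, and one must check that the identities $R^{\dagger}_{n}=\bigl(\bigl(R^{1/2}_{n}\bigr)^{\dagger}\bigr)^{2}$ and $\bigl(R^{1/2}_{n}\bigr)^{\dagger}R^{1/2}_{n}=Q_{n}$, the quadratic-form reading of $\bigl\langle E_{n},R^{\dagger}_{n}E_{n}\bigr\rangle$, and the vector $\bigl(R^{1/2}_{n}\bigr)^{\dagger}E_{n}$ are all legitimate on the relevant dense domains. Routing every expression through $R^{1/2}_{n}$ and its pseudoinverse, and using in both directions that $E_{n}\in ran\bigl(R^{1/2}_{n}\bigr)\subseteq dom\bigl(\bigl(R^{1/2}_{n}\bigr)^{\dagger}\bigr)$, keeps everything inside legitimate domains; past that, the proof is a short unwinding of definitions.
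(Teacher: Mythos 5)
Your proposal is correct and follows essentially the same route as the paper: both directions hinge on defining $u_{n}$ as the minimal-norm preimage of $E_{n}$ under $R^{1/2}_{n}$ (your $\bigl(R^{1/2}_{n}\bigr)^{\dagger}E_{n}$ is the same vector as the paper's $\bigl(R^{\dagger}_{n}\bigr)^{1/2}E_{n}$), reading the normalization \eqref{eq:c-2} as $\Vert u_{n}\Vert^{2}=1$, and unwinding $\Phi_{u_{n}}$. Your explicit attention to the unbounded-pseudoinverse domain issues when $ran(R_{n})$ is not closed is, if anything, more careful than the paper's treatment.
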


\begin{proof}
We first prove the converse direction. Fix $n$. Let $s_{n}=s\left(R_{n}\right)$
be the support projection of $R_{n}$, and note that $E_{n}\in ran(R^{1/2}_{n})\subset s_{n}H$.
On $s_{n}H$, the restriction $R_{n}|_{s_{n}H}$ is strictly positive
and invertible, with inverse $(R_{n}|_{s_{n}H})^{-1}$. The corresponding
Moore-Penrose inverse satisfies 
\[
R^{\dagger}_{n}x=(R_{n}|_{s_{n}H})^{-1}x,\qquad x\in s_{n}H.
\]
Since $E_{n}=R^{1/2}_{n}u_{n}$ lies in $s_{n}H$, we have
\[
R^{\dagger}_{n}E_{n}=(R_{n}|_{s_{n}H})^{-1}E_{n}=(R_{n}|_{s_{n}H})^{-1}R^{1/2}_{n}u_{n}=R^{-1/2}_{n}u_{n}
\]
(where all operators are understood as acting on $s_{n}H$). Therefore,
\[
\langle E_{n},R^{\dagger}_{n}E_{n}\rangle=\langle R^{1/2}_{n}u_{n},R^{-1/2}_{n}u_{n}\rangle=\langle u_{n},u_{n}\rangle=1.
\]

For the existence (forward) direction, assume that $R_{0}\in B\left(H\right)_{+}$
and sequences $\left(R_{n}\right),\left(E_{n}\right)$ satisfy the
hypotheses of the theorem. For each $n\ge0$, define the support projection
$s_{n}=s\left(R_{n}\right)$. Since $E_{n}\in ran(R^{1/2}_{n})$,
we have $E_{n}\in s_{n}H$. Define 
\[
u_{n}:=\left(R^{\dagger}_{n}\right)^{1/2}E_{n}.
\]
Then $u_{n}\in s_{n}H$ and 
\[
R^{1/2}_{n}u_{n}=R^{1/2}_{n}\left(R^{\dagger}_{n}\right)^{1/2}E_{n}=s_{n}E_{n}=E_{n}.
\]
Using the normalization condition, 
\[
\left\Vert u_{n}\right\Vert ^{2}=\left\langle u_{n},u_{n}\right\rangle =\langle\left(R^{\dagger}_{n}\right)^{1/2}E_{n},\left(R^{\dagger}_{n}\right)^{1/2}E_{n}\rangle=\left\langle E_{n},R^{\dagger}_{n}E_{n}\right\rangle =1.
\]
Thus $u_{n}$ is a unit vector in $H$ for every $n$. Finally, 
\[
\begin{aligned}\Phi_{u_{n}}\left(R_{n}\right) & =R_{n}-|R^{1/2}_{n}u_{n}\left\rangle \right\langle R^{1/2}_{n}u_{n}|\\
 & =R_{n}-\left|E_{n}\left\rangle \right\langle E_{n}\right|=R_{n+1}.
\end{aligned}
\]
This completes the proof.
\end{proof}

\subsection{Complete exhaustion and frames}\label{subsec:3-2}

The canonical dynamic decomposition in \prettyref{thm:c-4} expresses
the defect operator $\left(R_{0}-R_{\infty}\right)$ as a series of
rank-one terms built from the residual vectors $\left(E_{n}\right)$.
We now give a sharper characterization of the complete exhaustion
case $\left(R_{\infty}=0\right)$ in terms of the geometry of the
sequence $\left(E_{n}\right)$.

We begin with a simple observation.
\begin{prop}[Analysis and frame operators]
 \label{prop:c-6} Let $R_{0}\in B\left(H\right)_{+}$, let $\left(u_{n}\right)_{n\ge0}\subset H$
be a sequence of unit vectors, and let $\left(R_{n}\right)_{n\ge0}$,
$\left(E_{n}\right)_{n\ge0}$, and $R_{\infty}$ be as in \prettyref{thm:c-4}.
Define the analysis operator 
\[
T:H\to\ell^{2}\left(\mathbb{N}\right),\qquad Tx=\left(\left\langle E_{n},x\right\rangle \right)_{n\ge0}.
\]
Then $T$ is a bounded linear operator and the frame operator 
\[
S:=T^{*}T=\sum^{\infty}_{n=0}\left|E_{n}\left\rangle \right\langle E_{n}\right|
\]
exists in the strong operator topology with 
\[
S=R_{0}-R_{\infty}.
\]
\end{prop}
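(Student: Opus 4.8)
The plan is to deduce boundedness of $T$ directly from the energy identity of \prettyref{thm:c-4}(2), then compute the adjoint $T^{*}$ by hand, and finally use the telescoping identity of \prettyref{prop:c-2} to identify $T^{*}T$ with $R_{0}-R_{\infty}$. No spectral theory is needed; every step is a consequence of what has already been proved.

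First I would check that $T$ is a well-defined bounded operator. Linearity is immediate since each coordinate functional $x\mapsto\left\langle E_{n},x\right\rangle$ is linear. For the norm bound, fix $x\in H$; by \prettyref{thm:c-4}(2) the series $\sum_{n\ge0}\left|\left\langle E_{n},x\right\rangle\right|^{2}$ converges and equals $\left\langle x,\left(R_{0}-R_{\infty}\right)x\right\rangle$, so $Tx\in\ell^{2}\left(\mathbb{N}\right)$ and $\left\Vert Tx\right\Vert_{\ell^{2}}^{2}=\left\langle x,\left(R_{0}-R_{\infty}\right)x\right\rangle\le\left\Vert R_{0}-R_{\infty}\right\Vert\left\Vert x\right\Vert^{2}\le\left\Vert R_{0}\right\Vert\left\Vert x\right\Vert^{2}$, using $0\le R_{0}-R_{\infty}\le R_{0}$. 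Hence $T$ is bounded with $\left\Vert T\right\Vert\le\left\Vert R_{0}\right\Vert^{1/2}$.

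Next I would identify $T^{*}$. For a finitely supported $c=\left(c_{n}\right)\in\ell^{2}\left(\mathbb{N}\right)$ and $x\in H$, using that the inner products are linear in the second variable, $\left\langle Tx,c\right\rangle_{\ell^{2}}=\sum_{n}\overline{\left\langle E_{n},x\right\rangle}\,c_{n}=\sum_{n}c_{n}\left\langle x,E_{n}\right\rangle=\left\langle x,\sum_{n}c_{n}E_{n}\right\rangle$, so $T^{*}c=\sum_{n}c_{n}E_{n}$ for finitely supported $c$; since $T$, and hence $T^{*}$, is bounded and finitely supported sequences are dense in $\ell^{2}\left(\mathbb{N}\right)$, this formula extends (with convergence in $H$) to every $c\in\ell^{2}\left(\mathbb{N}\right)$. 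Composing, for $x\in H$ we obtain $Sx:=T^{*}Tx=T^{*}\bigl(\left(\left\langle E_{n},x\right\rangle\right)_{n}\bigr)=\sum_{n}\left\langle E_{n},x\right\rangle E_{n}=\sum_{n}\left|E_{n}\left\rangle \right\langle E_{n}\right|x$, which is exactly the assertion that $\sum_{n\ge0}\left|E_{n}\left\rangle \right\langle E_{n}\right|$ converges in the strong operator topology to $T^{*}T$. (Alternatively one may note that the partial sums $\sum_{n=0}^{N-1}\left|E_{n}\left\rangle \right\langle E_{n}\right|$ form an increasing sequence of positive operators bounded above by $R_{0}$, hence have a strong limit, and then match it with $T^{*}T$.)

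Finally, the identification $S=R_{0}-R_{\infty}$ is read off from \prettyref{prop:c-2}: the finite telescoping identity gives $\sum_{n=0}^{N-1}\left|E_{n}\left\rangle \right\langle E_{n}\right|=R_{0}-R_{N}$, and $R_{N}\to R_{\infty}$ strongly by \prettyref{prop:c-1}, so the strong limit of the partial sums is $R_{0}-R_{\infty}$; combined with the previous step this gives $S=T^{*}T=\sum_{n\ge0}\left|E_{n}\left\rangle \right\langle E_{n}\right|=R_{0}-R_{\infty}$. I do not expect a genuine obstacle here; the only points that need a little care are the conjugation bookkeeping in the adjoint computation and the passage from finitely supported $c$ to arbitrary $c\in\ell^{2}\left(\mathbb{N}\right)$, both of which are immediate once the boundedness in the first step is in hand.
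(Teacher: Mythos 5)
Your proof is correct and follows essentially the same route as the paper: boundedness of $T$ from the energy identity of \prettyref{thm:c-4}(2), identification of $T^{*}T$ with the rank-one series, and the final equality $S=R_{0}-R_{\infty}$ via the telescoping identity and strong convergence $R_{N}\to R_{\infty}$. The only difference is in the middle step, where you compute $T^{*}$ explicitly on finitely supported sequences and extend by density, while the paper argues via polarization of the quadratic-form identity; your version is, if anything, slightly more explicit about why the series $\sum_{n}\left|E_{n}\left\rangle \right\langle E_{n}\right|$ converges strongly.
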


\begin{proof}
For each $x\in H$, \prettyref{thm:c-4} gives 
\[
\left\langle x,\left(R_{0}-R_{\infty}\right)x\right\rangle =\sum^{\infty}_{n=0}\left|\left\langle E_{n},x\right\rangle \right|^{2}.
\]
In particular, the series $\sum_{n\ge0}\left|\left\langle E_{n},x\right\rangle \right|^{2}$
converges and 
\[
\sum^{\infty}_{n=0}\left|\left\langle E_{n},x\right\rangle \right|^{2}=\left\langle x,\left(R_{0}-R_{\infty}\right)x\right\rangle \le\left\Vert R_{0}-R_{\infty}\right\Vert \left\Vert x\right\Vert ^{2}.
\]
Thus $T$ is well defined and bounded, with $\left\Vert T\right\Vert ^{2}\le\left\Vert R_{0}-R_{\infty}\right\Vert $.
By definition of $T$, 
\begin{align*}
\left\langle Tx,Ty\right\rangle _{\ell^{2}} & =\sum^{\infty}_{n=0}\left\langle x,E_{n}\right\rangle \left\langle E_{n},y\right\rangle =\left\langle x,\left(\sum\left|E_{n}\left\rangle \right\langle E_{n}\right|\right)y\right\rangle 
\end{align*}
whenever the series converges in the strong operator topology. On
the other hand, 
\[
\left\langle x,T^{*}Ty\right\rangle =\left\langle Tx,Ty\right\rangle _{\ell^{2}}
\]
for all $x,y\in H$. Comparing the two identities and using the characterization
\[
\left\langle x,\left(R_{0}-R_{\infty}\right)y\right\rangle =\sum^{\infty}_{n=0}\left\langle x,E_{n}\right\rangle \left\langle E_{n},y\right\rangle 
\]
by polarization, we obtain 
\[
T^{*}T=\sum^{\infty}_{n=0}\left|E_{n}\left\rangle \right\langle E_{n}\right|=R_{0}-R_{\infty}
\]
with strong operator convergence. 
\end{proof}
The next result is a sharper characterization of the complete exhaustion
case $\left(R_{\infty}=0\right)$. It shows that, in this regime,
the residual vectors form a Parseval-type family for the energy encoded
by $R_{0}$.
\begin{thm}
\label{thm:c-7} Let $R_{0}\in B\left(H\right)_{+}$, $\left(u_{n}\right)_{n\ge0}\subset H$
a sequence of unit vectors, and $\left(R_{n}\right)_{n\ge0}$, $\left(E_{n}\right)_{n\ge0}$,
$R_{\infty}$ as in \prettyref{thm:c-4}. Set 
\[
\mathcal{K}:=\overline{ran}(R^{1/2}_{0})\quad\text{and}\quad\mathcal{E}:=\overline{span}\left\{ E_{n}:n\ge0\right\} \subset H.
\]
Then the following are equivalent:
\begin{enumerate}
\item \label{enu:c-7-1}$R_{\infty}=0$.
\item \label{enu:c-7-2}For every $x\in H$, 
\[
\sum^{\infty}_{n=0}\left|\left\langle E_{n},x\right\rangle \right|^{2}=\left\langle x,R_{0}x\right\rangle .
\]
\item \label{enu:c-7-3}$\mathcal{E}=\mathcal{K}$ and, for every $x\in\mathcal{K}$,
\[
\sum^{\infty}_{n=0}\left|\left\langle E_{n},x\right\rangle \right|^{2}=\left\langle x,R_{0}x\right\rangle .
\]
\end{enumerate}
In particular, when $R_{\infty}=0$, the family $\left(E_{n}\right)_{n\ge0}$
is complete in $\mathcal{K}$, and the frame operator of $\left(E_{n}\right)$
coincides with $R_{0}$. 
\end{thm}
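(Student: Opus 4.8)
The plan is to run everything off the energy identity \prettyref{thm:c-4}\,(2), which already gives $\langle x,(R_{0}-R_{\infty})x\rangle=\sum_{n\ge0}|\langle E_{n},x\rangle|^{2}$ for all $x\in H$, together with the frame-operator identification $S=\sum_{n}|E_{n}\rangle\langle E_{n}|=R_{0}-R_{\infty}$ from \prettyref{prop:c-6}. With these in hand, the equivalence \eqref{enu:c-7-1}$\Leftrightarrow$\eqref{enu:c-7-2} is immediate: since $R_{\infty}\ge0$, one has $R_{\infty}=0$ iff $\langle x,R_{\infty}x\rangle=0$ for every $x\in H$, and subtracting the energy identity from $\langle x,R_{0}x\rangle$ rewrites this as $\sum_{n}|\langle E_{n},x\rangle|^{2}=\langle x,R_{0}x\rangle$ for all $x$, which is \eqref{enu:c-7-2}.

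The structural input for the remaining implications is that $\mathcal{E}\subseteq\mathcal{K}$ and $\mathcal{K}^{\perp}=ker R_{0}$. For the latter I would use $\mathcal{K}^{\perp}=\overline{ran}(R^{1/2}_{0})^{\perp}=ker R^{1/2}_{0}=ker R_{0}$, by selfadjointness of $R^{1/2}_{0}$ and $R^{1/2}_{0}x=0\Leftrightarrow R_{0}x=0$. For the former: if $y\in ker R_{0}$, monotonicity $R_{n}\le R_{0}$ (\prettyref{prop:c-1}) forces $0\le\langle y,R_{n}y\rangle\le\langle y,R_{0}y\rangle=0$, hence $R^{1/2}_{n}y=0$ and $\langle E_{n},y\rangle=\langle R^{1/2}_{n}u_{n},y\rangle=\langle u_{n},R^{1/2}_{n}y\rangle=0$; thus every $E_{n}$ is orthogonal to $ker R_{0}=\mathcal{K}^{\perp}$, i.e.\ $E_{n}\in\mathcal{K}$, so $\mathcal{E}\subseteq\mathcal{K}$.

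Given this, for \eqref{enu:c-7-2}$\Rightarrow$\eqref{enu:c-7-3} only $\mathcal{E}=\mathcal{K}$ needs an argument, since the displayed identity in \eqref{enu:c-7-3} is the restriction of \eqref{enu:c-7-2} to $\mathcal{K}$: if $x\in\mathcal{K}$ with $x\perp\mathcal{E}$, then \eqref{enu:c-7-2} gives $\langle x,R_{0}x\rangle=\sum_{n}|\langle E_{n},x\rangle|^{2}=0$, so $x\in ker R^{1/2}_{0}=\mathcal{K}^{\perp}$ and $x=0$; hence $\mathcal{K}\ominus\mathcal{E}=\{0\}$ and $\mathcal{E}=\mathcal{K}$. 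For \eqref{enu:c-7-3}$\Rightarrow$\eqref{enu:c-7-2} I would decompose an arbitrary $x\in H$ as $x=x_{0}+x_{1}$ with $x_{0}\in\mathcal{K}$, $x_{1}\in\mathcal{K}^{\perp}=ker R_{0}$; then $\langle x,R_{0}x\rangle=\langle x_{0},R_{0}x_{0}\rangle$ (since $R_{0}x_{1}=0$ and $R_{0}x_{0}\in ran(R_{0})\subseteq\mathcal{K}$) and $\langle E_{n},x\rangle=\langle E_{n},x_{0}\rangle$ (since $E_{n}\in\mathcal{K}$), so applying \eqref{enu:c-7-3} to $x_{0}$ gives $\sum_{n}|\langle E_{n},x\rangle|^{2}=\langle x_{0},R_{0}x_{0}\rangle=\langle x,R_{0}x\rangle$. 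Finally, when $R_{\infty}=0$, the equivalence with \eqref{enu:c-7-3} yields $\overline{span}\{E_{n}:n\ge0\}=\mathcal{E}=\mathcal{K}$, i.e.\ completeness of $(E_{n})$ in $\mathcal{K}$, while \prettyref{prop:c-6} gives $S=R_{0}-R_{\infty}=R_{0}$ for the frame operator.

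I do not expect a genuine obstacle: the theorem is essentially a dictionary among $R_{\infty}=0$, the Parseval-type identity, and the span condition, all built on \prettyref{thm:c-4}\,(2) and \prettyref{prop:c-6}. The one place that calls for care — and the hinge of both nontrivial implications — is the pair of identifications $\mathcal{K}^{\perp}=ker R_{0}$ and $E_{n}\in\mathcal{K}$; once these are secured, the rest is orthogonal bookkeeping.
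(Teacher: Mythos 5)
Your proof is correct, and while it establishes the same dictionary as the paper via essentially the same skeleton, the two nontrivial steps are argued differently. For the inclusion $\mathcal{E}\subseteq\mathcal{K}$, the paper asserts $E_{n}\in ran(R_{n}^{1/2})\subseteq ran(R_{0}^{1/2})$, an operator-range inclusion that implicitly rests on Douglas' lemma applied to $R_{n}\le R_{0}$; you instead show $E_{n}\perp\ker R_{0}=\mathcal{K}^{\perp}$ directly from $\langle y,R_{n}y\rangle\le\langle y,R_{0}y\rangle=0$, which is more elementary and, if anything, plugs a small unjustified step in the paper. For the reverse inclusion $\mathcal{K}\subseteq\mathcal{E}$ under (2), the paper routes through the frame operator of \prettyref{prop:c-6}, using $T^{*}T=R_{0}$ and $ran(R_{0})=ran(T^{*}T)\subseteq ran(T^{*})\subseteq\mathcal{E}$ together with density of $ran(R_{0})$ in $\mathcal{K}$; you instead show $\mathcal{K}\ominus\mathcal{E}=\{0\}$ by noting that $x\in\mathcal{K}$ orthogonal to every $E_{n}$ forces $\langle x,R_{0}x\rangle=0$, hence $x\in\ker R_{0}^{1/2}=\mathcal{K}^{\perp}$, so $x=0$. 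Both are valid; the paper's version ties the completeness statement to the analysis/frame-operator machinery it has just built, whereas yours is self-contained quadratic-form bookkeeping. You also dispatch (1)$\Leftrightarrow$(2) in one stroke from positivity of $R_{\infty}$ rather than closing the cycle through (3); the paper's (3)$\Rightarrow$(1) decomposition $x=x_{1}+x_{0}$ is identical to yours. No gaps.
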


\begin{proof}
\eqref{enu:c-7-1}$\Rightarrow$\eqref{enu:c-7-2}. If $R_{\infty}=0$,
then \prettyref{thm:c-4} gives, for every $x\in H$, 
\[
\left\langle x,R_{0}x\right\rangle =\left\langle x,\left(R_{0}-R_{\infty}\right)x\right\rangle =\sum^{\infty}_{n=0}\left|\left\langle E_{n},x\right\rangle \right|^{2},
\]
which is \eqref{enu:c-7-2}.

\eqref{enu:c-7-2}$\Rightarrow$\eqref{enu:c-7-3}. First, $ran(R^{1/2}_{0})\subset\mathcal{E}$.
Indeed, by \prettyref{prop:c-6} we have $T^{*}T=R_{0}-R_{\infty}$.
Under assumption \eqref{enu:c-7-2} we obtain $R_{0}-R_{\infty}=R_{0}$,
so $R_{\infty}=0$ and therefore $T^{*}T=R_{0}$. The range of $T^{*}$
is the closed linear span of $\left\{ E_{n}\right\} $, that is, $\mathcal{E}$.
On the other hand, $ran\left(R_{0}\right)\subset\mathcal{E}$, because
\[
\mathrm{ran}\left(R_{0}\right)=ran\left(T^{*}T\right)\subset ran\left(T^{*}\right)\subset\mathcal{E}.
\]
Since $\mathcal{K}=\overline{ran}(R^{1/2}_{0})$ and $ran\left(R_{0}\right)$
is dense in $\overline{ran}(R^{1/2}_{0})$, we have 
\[
\mathcal{K}\subset\mathcal{E}.
\]
Conversely, each $E_{n}$ lies in $ran(R^{1/2}_{n})\subset ran(R^{1/2}_{0})$,
hence $\mathcal{E}\subset\mathcal{K}$. Therefore $\mathcal{E}=\mathcal{K}$,
and \eqref{enu:c-7-2} restricted to $x\in\mathcal{K}$ yields \eqref{enu:c-7-3}.

\eqref{enu:c-7-3}$\Rightarrow$\eqref{enu:c-7-1}. Let $x\in H$.
Decompose $x=x_{1}+x_{0}$ with $x_{1}\in\mathcal{K}$ and $x_{0}\in\mathcal{K}^{\perp}$.
Since $\mathcal{K}=\overline{ran}(R^{1/2}_{0})$, we have $R_{0}x_{0}=0$,
so 
\[
\left\langle x,R_{0}x\right\rangle =\left\langle x_{1},R_{0}x_{1}\right\rangle .
\]
On the other hand, each $E_{n}$ lies in $\mathcal{K}$, hence $\left\langle E_{n},x_{0}\right\rangle =0$,
and 
\[
\left\langle E_{n},x\right\rangle =\left\langle E_{n},x_{1}\right\rangle .
\]
Applying \eqref{enu:c-7-3} to $x_{1}\in\mathcal{K}$ gives 
\[
\sum^{\infty}_{n=0}\left|\left\langle E_{n},x\right\rangle \right|^{2}=\sum^{\infty}_{n=0}\left|\left\langle E_{n},x_{1}\right\rangle \right|^{2}=\left\langle x_{1},R_{0}x_{1}\right\rangle =\left\langle x,R_{0}x\right\rangle .
\]
Thus \eqref{enu:c-7-2} holds for all $x\in H$. Applying \prettyref{thm:c-4}
again, 
\[
\left\langle x,\left(R_{0}-R_{\infty}\right)x\right\rangle =\sum^{\infty}_{n=0}\left|\left\langle E_{n},x\right\rangle \right|^{2}=\left\langle x,R_{0}x\right\rangle 
\]
for all $x$, and therefore $\left\langle x,R_{\infty}x\right\rangle =0$
for all $x\in H$. Since $R_{\infty}\ge0$, this implies $R_{\infty}=0$. 
\end{proof}
\begin{prop}
Let $R_{0}$ be a trace-class positive operator. If the sequence of
unit vectors $(u_{n})_{n\ge0}$ is dense in the unit sphere of $H$,
then $R_{\infty}=0$.
\end{prop}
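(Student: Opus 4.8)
The plan is to combine the trace-class energy identity of \prettyref{prop:c-3} with the density hypothesis and the continuity of the quadratic form $v\mapsto\left\langle v,R_{\infty}v\right\rangle$. First I would extract the decisive consequence of the trace-class assumption: by \prettyref{prop:c-3} the series $\sum_{n\ge0}\left\Vert E_{n}\right\Vert^{2}$ converges, so its terms tend to $0$, i.e.
\[
\left\langle u_{n},R_{n}u_{n}\right\rangle=\left\langle R^{1/2}_{n}u_{n},R^{1/2}_{n}u_{n}\right\rangle=\left\Vert E_{n}\right\Vert^{2}\longrightarrow0.
\]
Since $\left(R_{n}\right)$ decreases to $R_{\infty}$ we have $R_{n}\ge R_{\infty}\ge0$ for every $n$, whence $0\le\left\langle u_{n},R_{\infty}u_{n}\right\rangle\le\left\langle u_{n},R_{n}u_{n}\right\rangle\to0$; that is, $\left\langle u_{n},R_{\infty}u_{n}\right\rangle\to0$. (If $R_{n_{0}}=0$ for some $n_{0}$ then $R_{\infty}=0$ and there is nothing to prove, so we may keep the running assumption $R_{n}\neq0$ for all $n$.)

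Next I would upgrade this to $R_{\infty}=0$ using density. Suppose for contradiction that $R_{\infty}\neq0$. Then $\left\Vert R_{\infty}\right\Vert=\sup_{\left\Vert v\right\Vert=1}\left\langle v,R_{\infty}v\right\rangle>0$, so there is a unit vector $v$ with $\left\langle v,R_{\infty}v\right\rangle\ge\tfrac12\left\Vert R_{\infty}\right\Vert>0$. I would then produce a subsequence $\left(u_{n_{k}}\right)$ with $n_{k}\uparrow\infty$ and $u_{n_{k}}\to v$ in norm; granting this, joint continuity of the inner product together with boundedness of $R_{\infty}$ gives $\left\langle u_{n_{k}},R_{\infty}u_{n_{k}}\right\rangle\to\left\langle v,R_{\infty}v\right\rangle\ge\tfrac12\left\Vert R_{\infty}\right\Vert>0$, contradicting the limit just established. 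Hence $R_{\infty}=0$, and by \prettyref{thm:c-7} the residual vectors $\left(E_{n}\right)$ then form a Parseval frame for $\overline{ran}(R^{1/2}_{0})$.

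The one point that needs care — and the only real obstacle — is the extraction of a subsequence with $n_{k}\to\infty$ from a set that is merely dense in the unit sphere $S_{H}=\{x\in H:\left\Vert x\right\Vert=1\}$. Density alone only guarantees that every nonempty relatively open subset of $S_{H}$ contains \emph{some} $u_{n}$; I need each ball $B\!\left(v,1/k\right)\cap S_{H}$ to contain \emph{infinitely many} terms $u_{n}$. This holds because $S_{H}$ has no isolated points: for a complex Hilbert space $e^{i\theta}v\to v$ as $\theta\to0$ with $e^{i\theta}v\neq v$, so every nonempty relatively open $U\subseteq S_{H}$ is infinite (an open subspace of a space without isolated points has none, and a nonempty Hausdorff space without isolated points is infinite); removing the finitely many sequence terms that might already lie in $U$ leaves a nonempty relatively open set, which by density must contain a further $u_{m}$. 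Iterating with $U=B\!\left(v,1/k\right)\cap S_{H}$ yields strictly increasing indices $n_{k}$ with $u_{n_{k}}\to v$. Equivalently, and perhaps more cleanly, one can argue directly: by the established limit the set $\{n:\left\langle u_{n},R_{\infty}u_{n}\right\rangle<\tfrac14\left\Vert R_{\infty}\right\Vert\}$ is cofinite, yet by continuity of $v\mapsto\left\langle v,R_{\infty}v\right\rangle$ it must omit an entire relative neighbourhood of $v$ in $S_{H}$, which a dense sequence cannot avoid — the desired contradiction.
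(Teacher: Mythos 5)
Your proof is correct and follows essentially the same route as the paper: the trace identity from \prettyref{prop:c-3} forces $\left\langle u_{n},R_{\infty}u_{n}\right\rangle \to 0$ via $R_{n}\ge R_{\infty}$, and density plus continuity of the quadratic form then rules out $R_{\infty}\neq 0$. If anything, you are more careful than the paper at the one delicate point — justifying that every relative neighbourhood in the unit sphere contains infinitely many $u_{n}$ (via the sphere having no isolated points), a step the paper's proof uses implicitly when it extracts the subsequence $(u_{n_{k}})$.
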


\begin{proof}
From \prettyref{prop:c-3}, we know that
\[
\sum^{\infty}_{n=0}\left\Vert E_{n}\right\Vert ^{2}=\sum^{\infty}_{n=0}\left\langle u_{n},R_{n}u_{n}\right\rangle =tr\left(R_{0}-R_{\infty}\right)<\infty.
\]
Since the sequence of operators is decreasing, $R_{n}\ge R_{\infty}\ge0$
for all $n$. This implies
\[
\left\langle u_{n},R_{n}u_{n}\right\rangle \ge\left\langle u_{n},R_{\infty}u_{n}\right\rangle \ge0.
\]
By the comparison test, the series $\sum^{\infty}_{n=0}\left\langle u_{n},R_{\infty}u_{n}\right\rangle $
converges.

Suppose for the sake of contradiction that $R_{\infty}\ne0$. Then
there exists a unit vector $v\in H$ and a constant $\delta>0$ such
that $\left\langle v,R_{\infty}v\right\rangle =\delta$. Since $x\mapsto\left\langle x,R_{\infty}x\right\rangle $
is continuous and the sequence $(u_{n})$ is dense in the unit sphere,
there exists a subsequence $(u_{n_{k}})$ and an open neighborhood
$U$ of $v$ such that $u_{n_{k}}\in U$ for all $k$, and $\left\langle u_{n_{k}},R_{\infty}u_{n_{k}}\right\rangle \ge\frac{\delta}{2}$
for all $k$. This implies that the sum $\sum^{\infty}_{n=0}\left\langle u_{n},R_{\infty}u_{n}\right\rangle $
has infinitely many terms bounded away from zero, and therefore must
diverge. This contradicts the convergence established above. Thus,
we must have $R_{\infty}=0$.
\end{proof}
\begin{cor}[Greedy Convergence]
\label{cor:c-9} Let $R_{0}$ be a trace-class positive operator.
Fix $c\in(0,1]$. If the unit vectors $(u_{n})_{n\ge0}$ satisfy
\[
\left\langle u_{n},R_{n}u_{n}\right\rangle \ge c\left\Vert R_{n}\right\Vert ,\quad\forall n
\]
then $R_{n}\to0$ in operator norm, hence $R_{\infty}=0$.
\end{cor}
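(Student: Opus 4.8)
The plan is to read the statement off from the trace-class energy identity of \prettyref{prop:c-3} together with the greedy hypothesis; no new machinery is needed, so the write-up will be short.

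First I would record the elementary identity
\[
\left\Vert E_{n}\right\Vert ^{2}=\left\langle R^{1/2}_{n}u_{n},R^{1/2}_{n}u_{n}\right\rangle =\left\langle u_{n},R_{n}u_{n}\right\rangle ,
\]
valid since $R^{1/2}_{n}$ is selfadjoint. Then I invoke \prettyref{prop:c-3}: because $R_{0}$ is trace-class,
\[
\sum^{\infty}_{n=0}\left\langle u_{n},R_{n}u_{n}\right\rangle =\sum^{\infty}_{n=0}\left\Vert E_{n}\right\Vert ^{2}=tr\left(R_{0}-R_{\infty}\right)\leq tr\left(R_{0}\right)<\infty .
\]
Convergence of this series of nonnegative terms forces $\left\langle u_{n},R_{n}u_{n}\right\rangle \to0$ as $n\to\infty$.

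Next I apply the greedy inequality: for every $n$ we have $0\leq c\left\Vert R_{n}\right\Vert \leq\left\langle u_{n},R_{n}u_{n}\right\rangle$, so dividing by the fixed constant $c>0$ and using the squeeze theorem gives $\left\Vert R_{n}\right\Vert \to0$, i.e. $R_{n}\to0$ in operator norm. Finally, to identify the limit, I note that $0\leq R_{\infty}\leq R_{n}$ by \prettyref{prop:c-1}, and for positive operators $A\leq B$ implies $\left\Vert A\right\Vert \leq\left\Vert B\right\Vert$; hence $\left\Vert R_{\infty}\right\Vert \leq\left\Vert R_{n}\right\Vert \to0$, so $R_{\infty}=0$. (Equivalently, norm convergence $R_{n}\to0$ implies strong convergence $R_{n}\to0$, and strong limits are unique, so the strong limit $R_{\infty}$ vanishes.)

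There is essentially no obstacle here: the corollary is a one-line consequence of the summability in \prettyref{prop:c-3}. The only points to keep in mind are the standing assumption $R_{n}\neq0$ for all $n$ made at the start of \prettyref{sec:3} (if $R_{n}=0$ for some $n$ the sequence is eventually zero and the claim is trivial), and the conceptual observation that the greedy lower bound $\left\langle u_{n},R_{n}u_{n}\right\rangle \geq c\left\Vert R_{n}\right\Vert$ is precisely what upgrades the summability of the scalar quantities $\left\langle u_{n},R_{n}u_{n}\right\rangle$ into norm decay of the residual operators themselves.
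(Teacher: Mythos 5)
Your proposal is correct and follows essentially the same route as the paper: both use the summability $\sum_{n}\left\Vert E_{n}\right\Vert ^{2}=\sum_{n}\left\langle u_{n},R_{n}u_{n}\right\rangle =tr\left(R_{0}-R_{\infty}\right)<\infty$ from \prettyref{prop:c-3} to conclude $\left\langle u_{n},R_{n}u_{n}\right\rangle \to0$, then the greedy lower bound to force $\left\Vert R_{n}\right\Vert \to0$ and hence $R_{\infty}=0$. Your extra remarks (the identity $\left\Vert E_{n}\right\Vert ^{2}=\left\langle u_{n},R_{n}u_{n}\right\rangle$ and the justification that $0\leq R_{\infty}\leq R_{n}$ pins down the limit) only make explicit steps the paper leaves implicit.
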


\begin{proof}
By \prettyref{prop:c-3}, 
\[
\sum^{\infty}_{n=0}\left\langle u_{n},R_{n}u_{n}\right\rangle =tr(R_{0}-R_{\infty})<\infty,
\]
so $\langle u_{n},R_{n}u_{n}\rangle\to0$. The weak greedy condition
gives
\[
0\le c\left\Vert R_{n}\right\Vert \le\left\langle u_{n},R_{n}u_{n}\right\rangle \to0,
\]
hence $\left\Vert R_{n}\right\Vert \rightarrow0$. Therefore $R_{n}\to0$
in norm, which implies $R_{\infty}=0$.
\end{proof}
\begin{rem}[weaker hypothesis]
 Because $0\le R_{n+1}\le R_{n}$ implies $\left\Vert R_{n+1}\right\Vert \le\left\Vert R_{n}\right\Vert $,
it suffices that the weak greedy inequality hold for infinitely many
$n$. Along that subsequence the same limit argument gives $\left\Vert R_{n_{k}}\right\Vert \to0$,
and monotonicity then forces $\left\Vert R_{n}\right\Vert \to0$ for
the entire sequence.
\end{rem}

\begin{rem}[Constructive Parsevalization]
 The exhaustion results above admit a direct application to the construction
of frames. Assume $R_{\infty}=0$. If we initialize the dynamics with
$R_{0}=I$, then by \prettyref{thm:c-4}, the extracted vectors $E_{n}=R^{1/2}_{n}u_{n}$
satisfy 
\[
I=\sum^{\infty}_{n=0}\left|E_{n}\left\rangle \right\langle E_{n}\right|
\]
and thus form a Parseval frame for $H$. This identity holds in the
strong operator topology, and the reconstruction formula $x=\sum_{n\ge0}\left\langle E_{n},x\right\rangle E_{n}$
converges in norm for each $x\in H$. This process acts as a “soft”
alternative to Gram-Schmidt orthogonalization. In Gram-Schmidt, a
vector linearly dependent on its predecessors is mapped to zero (annihilated).
Here, the update $R_{n+1}=\Phi_{u_{n}}(R_{n})$ merely deflates the
energy in the redundant direction; the resulting vector $E_{n}$ is
nonzero but scaled down exactly enough to maintain the Parseval property.
Conceptually, this is a “covariance-action” analogue to the Kaczmarz
method: whereas Kaczmarz projects a vector to satisfy inner product
constraints, this iteration compresses the residual operator to satisfy
the operator-valued energy identity.
\end{rem}

\begin{example}[Iterative Kernel Feature Maps]
 Let $H_{K}$ be a Reproducing Kernel Hilbert Space (RKHS) on a domain
$\Omega$ (assume $\Omega$ is a separable metric space and $K$ is
continuous so that $x\mapsto k_{x}$ is norm-continuous) with reproducing
kernel $K(x,y)$. A central problem in learning theory is to find
a feature map $\psi:\Omega\to\ell^{2}$ such that 
\[
K\left(x,y\right)=\left\langle \psi\left(x\right),\psi\left(y\right)\right\rangle .
\]
While Mercer's theorem provides such a decomposition via eigenfunctions,
it requires diagonalizing the integral operator. The residual-weighted
decomposition offers an iterative construction. 

Let $\left\{ x_{n}\right\} _{n\ge0}\subset\Omega$ be a dense set
of sample points (hence $\left\{ k_{x_{n}}\right\} $ is total and
$\left\{ u_{n}\right\} $ is dense in the unit sphere).

Initialize $R_{0}=I$ (the identity on $H_{K}$). At step $n$, choose
the normalized kernel section $u_{n}=k_{x_{n}}/\left\Vert k_{x_{n}}\right\Vert $,
where $k_{x}=K\left(\cdot,x\right)$. Compute the residual feature
$E_{n}=R^{1/2}_{n}u_{n}$ and update $R_{n+1}=\Phi_{u_{n}}\left(R_{n}\right)$.

Assume the weak greedy condition $\left\langle u_{n},R_{n}u_{n}\right\rangle \ge c\left\Vert R_{n}\right\Vert $
for some $c\in(0,1]$ and all $n$. Then $R_{n}\to0$ in norm by \prettyref{cor:c-9},
hence $R_{\infty}=0$. Consequently, the resulting functions $\left\{ E_{n}\right\} $
form a Parseval frame for $H_{K}$. Applying the reproducing property
$\left\langle f,k_{x}\right\rangle =f(x)$, we recover the kernel
decomposition pointwise: 
\[
K\left(x,y\right)=\left\langle k_{x},k_{y}\right\rangle =\sum^{\infty}_{n=0}\left\langle k_{x},E_{n}\right\rangle \left\langle E_{n},k_{y}\right\rangle =\sum^{\infty}_{n=0}E_{n}\left(x\right)\overline{E_{n}\left(y\right)}.
\]
This approach generates the feature functions $E_{n}$ sequentially
and stably, avoiding the global inversion/diagonalization of a large
Gram or integral operator. 
\end{example}

\bibliographystyle{amsalpha}
\bibliography{ref}

\newcommand{\etalchar}[1]{$^{#1}$}
\providecommand{\bysame}{\leavevmode\hbox to3em{\hrulefill}\thinspace}
\providecommand{\MR}{\relax\ifhmode\unskip\space\fi MR }
\providecommand{\MRhref}[2]{%
  \href{http://www.ams.org/mathscinet-getitem?mr=#1}{#2}
}
\providecommand{\href}[2]{#2}
\begin{thebibliography}{CDHR08}

\bibitem[BCD{\etalchar{+}}11]{MR2821591}
Peter Binev, Albert Cohen, Wolfgang Dahmen, Ronald DeVore, Guergana Petrova,
  and Przemyslaw Wojtaszczyk, \emph{Convergence rates for greedy algorithms in
  reduced basis methods}, SIAM J. Math. Anal. \textbf{43} (2011), no.~3,
  1457--1472. \MR{2821591}

\bibitem[CCS10]{MR2600248}
Jian-Feng Cai, Emmanuel~J. Cand\`es, and Zuowei Shen, \emph{A singular value
  thresholding algorithm for matrix completion}, SIAM J. Optim. \textbf{20}
  (2010), no.~4, 1956--1982. \MR{2600248}

\bibitem[CDHR08]{MR2738209}
Yanqing Chen, Timothy~A. Davis, William~W. Hager, and Sivasankaran
  Rajamanickam, \emph{Algorithm 887: {CHOLMOD}, supernodal sparse {C}holesky
  factorization and update/downdate}, ACM Trans. Math. Software \textbf{35}
  (2008), no.~3, Art. 22, 14. \MR{2738209}

\bibitem[Chr16]{MR3495345}
Ole Christensen, \emph{An introduction to frames and {R}iesz bases}, second
  ed., Applied and Numerical Harmonic Analysis, Birkh\"{a}user/Springer,
  [Cham], 2016. \MR{3495345}

\bibitem[CR09]{MR2565240}
Emmanuel~J. Cand\`es and Benjamin Recht, \emph{Exact matrix completion via
  convex optimization}, Found. Comput. Math. \textbf{9} (2009), no.~6,
  717--772. \MR{2565240}

\bibitem[DL70]{MR263379}
E.~B. Davies and J.~T. Lewis, \emph{An operational approach to quantum
  probability}, Comm. Math. Phys. \textbf{17} (1970), 239--260. \MR{263379}

\bibitem[DRSL16]{MR3509211}
Timothy~A. Davis, Sivasankaran Rajamanickam, and Wissam~M. Sid-Lakhdar, \emph{A
  survey of direct methods for sparse linear systems}, Acta Numer. \textbf{25}
  (2016), 383--566. \MR{3509211}

\bibitem[GVL13]{MR3024913}
Gene~H. Golub and Charles~F. Van~Loan, \emph{Matrix computations}, fourth ed.,
  Johns Hopkins Studies in the Mathematical Sciences, Johns Hopkins University
  Press, Baltimore, MD, 2013. \MR{3024913}

\bibitem[Hig90]{MR1098323}
Nicholas~J. Higham, \emph{Analysis of the {C}holesky decomposition of a
  semi-definite matrix}, Reliable numerical computation, Oxford Sci. Publ.,
  Oxford Univ. Press, New York, 1990, pp.~161--185. \MR{1098323}

\bibitem[HL00]{MR1686653}
Deguang Han and David~R. Larson, \emph{Frames, bases and group
  representations}, Mem. Amer. Math. Soc. \textbf{147} (2000), no.~697, x+94.
  \MR{1686653}

\bibitem[HLLL14]{MR3329092}
Deguang Han, David~R. Larson, Bei Liu, and Rui Liu, \emph{Dilations of frames,
  operator-valued measures and bounded linear maps}, Operator methods in
  wavelets, tilings, and frames, Contemp. Math., vol. 626, Amer. Math. Soc.,
  Providence, RI, 2014, pp.~33--53. \MR{3329092}

\bibitem[Hol11]{MR2797301}
Alexander Holevo, \emph{Probabilistic and statistical aspects of quantum
  theory}, second ed., Quaderni/Monographs, vol.~1, Edizioni della Normale,
  Pisa, 2011, With a foreword from the second Russian edition by K. A. Valiev.
  \MR{2797301}

\bibitem[Kra83]{MR725167}
Karl Kraus, \emph{States, effects, and operations}, Lecture Notes in Physics,
  vol. 190, Springer-Verlag, Berlin, 1983, Fundamental notions of quantum
  theory, Lecture notes edited by A. B\"{o}hm, J. D. Dollard and W. H.
  Wootters. \MR{725167}

\bibitem[L\"51]{MR46289}
Gerhart L\"{u}ders, \emph{\"{U}ber die {Z}ustands\"{a}nderung durch den
  {M}essprozess}, Ann. Physik (6) \textbf{8} (1951), 322--328. \MR{46289}

\bibitem[RFP10]{MR2680543}
Benjamin Recht, Maryam Fazel, and Pablo~A. Parrilo, \emph{Guaranteed
  minimum-rank solutions of linear matrix equations via nuclear norm
  minimization}, SIAM Rev. \textbf{52} (2010), no.~3, 471--501. \MR{2680543}

\bibitem[RS80]{MR751959}
Michael Reed and Barry Simon, \emph{Methods of modern mathematical physics.
  {I}}, second ed., Academic Press, Inc. [Harcourt Brace Jovanovich,
  Publishers], New York, 1980, Functional analysis. \MR{751959}

\bibitem[Tem11]{MR2848161}
Vladimir Temlyakov, \emph{Greedy approximation}, Cambridge Monographs on
  Applied and Computational Mathematics, vol.~20, Cambridge University Press,
  Cambridge, 2011. \MR{2848161}

\bibitem[Woj00]{MR1806955}
P.~Wojtaszczyk, \emph{Greedy algorithm for general biorthogonal systems}, J.
  Approx. Theory \textbf{107} (2000), no.~2, 293--314. \MR{1806955}

\end{thebibliography}

\end{document}